\documentclass[11pt,reqno]{amsart}
\usepackage{amssymb,mathtools,calc,verbatim,enumitem,tikz,url,hyperref,mathrsfs,cite,fullpage}
\usepackage{bbm}
\usepackage{stmaryrd}
\usepackage{textcomp}
\usepackage{setspace}
\usepackage{amssymb}
\usepackage{amsthm}
\usepackage{amsmath}
\usepackage{graphicx}
\usepackage{marvosym}
\usepackage{empheq}
\usepackage{latexsym}
\usepackage{fontenc}
\usepackage{color}
\usepackage{hyperref}
\usepackage{cleveref}
\usepackage{dsfont}

\usepackage{todonotes}

\newenvironment{poc}{\begin{proof}[Proof of claim]}{\end{proof}}

\newtheorem{theorem}{Theorem}[section]
\newtheorem{lemma}[theorem]{Lemma}

\newtheorem{proposition}[theorem]{Proposition}
\newtheorem{claim}[theorem]{Claim}
\newtheorem*{claim*}{Claim}

\theoremstyle{definition}

\newtheorem*{qu*}{Question}
\theoremstyle{remark}
\newtheorem{remark}[theorem]{Remark}

\renewcommand\Pr{\operatorname{\mathbb{P}}}

\renewcommand\le{\leqslant}
\renewcommand\ge{\geqslant}
\renewcommand\to{\rightarrow}

\def\<{\langle }
\def\>{\rangle }

\begin{document}

\title{Sharp threshold for Hamilton cycles in randomly perturbed sparse graphs}
\author{
Guorui Ma
\and 
Zhifei Yan}

\address{Shanghai Institute for Mathematics and Interdisciplinary Sciences (SIMIS), Shanghai, 200433, China, Research institute of Intelligent Complex Systems, Fudan University, Shanghai, 200433, China}\email{mgr18@tsinghua.org.cn}

\address{ECOPRO, Institute for Basic Science, 55 Expo-ro, Yuseong-gu, Daejeon, 34126, Korea}\email{zhifeiyan@ibs.re.kr}

\thanks{}

\begin{abstract}
We determine the sharp threshold for Hamilton cycles in randomly perturbed sparse graphs. For any $\alpha=\alpha(n)=o(1)$, let $G_{\alpha}$ be an $n$-vertex graph with minimum degree $\delta(G_{\alpha})\ge\alpha n$. We prove that if $$p\ge(1+\varepsilon)\frac{\log(1/\alpha)}{n},$$
then the union $G_{\alpha}\cup G(n,p)$ is Hamiltonian asymptotically almost surely. This significantly strengthens a recent result of Hahn-Klimroth, Maesaka, Mogge, Mohr, and Parczyk by improving the leading constant from 6 to the optimal value of 1. Crucially, we show that this bound on $p$ is best possible when $\alpha n\rightarrow\infty$, thereby establishing the exact probability threshold for Hamiltonicity in this sparse regime. Our proof relies on a robust random expansion lemma, P\'{o}sa's booster lemma, and a sprinkling argument.
\end{abstract}
	
%\tableofcontents
\maketitle

\section{Introduction}

A central theme in extremal graph theory is finding sufficient conditions for a graph to contain specific spanning structures, such as perfect matchings or Hamilton cycles. A classic result of this type is Dirac's theorem, which states that any $n$-vertex graph with minimum degree at least $n/2$ contains a Hamilton cycle. On the other end of the spectrum, in the purely random setting, a celebrated result of Korshunov, and independently P\'osa \cite{posa}, established that the binomial random graph $G(n,p)$ asymptotically almost surely (a.a.s.) contains a Hamilton cycle when
$$p = \frac{\log n + \log \log n + \omega(1)}{n}.$$

To bridge the gap between the purely deterministic and purely random settings, Bohman, Frieze, and Martin \cite{BFM} introduced the model of \emph{randomly perturbed graphs}. In this framework, one starts with a dense deterministic graph $G_\alpha$ satisfying a minimum degree condition $\delta(G_\alpha) \ge \alpha n$ for some constant $\alpha > 0$, and adds random edges from $G(n,p)$. In their seminal work, they demonstrated that adding merely a linear number of random edges is sufficient: $G_\alpha \cup G(n,p)$ is Hamiltonian a.a.s.\ provided 
$$p \ge C/n$$
for some constant $C$ depending only on $\alpha$. This $O(1/n)$ probability threshold is a significant reduction from the $O(\log n / n)$ required in the purely random setting, powerfully illustrating how a small random perturbation can fundamentally change the threshold for the appearance of global structures. Recently, Espuny D\'iaz and Razafindravola \cite{ER} further refined the threshold behaviour in this dense regime by conjecturing the optimal constant $C$ as a function of $\alpha$; crucially, they also established the exact random edge threshold when the deterministic minimum degree is close to Dirac's condition.

While the $O(1/n)$ threshold behaviour is firmly established for constant $\alpha$, determining the (sharp) threshold for Hamilton cycles in \emph{sparse} perturbed graphs, specifically when the density parameter $\alpha = o(1)$, has remained a compelling open problem. Hahn-Klimroth, Maesaka, Mogge, Mohr, and Parczyk \cite{HMMMP} recently made significant progress in this setting. They proved that for an $n$-vertex graph $G_\alpha$ with $\delta(G_\alpha) \ge \alpha n$, the perturbed graph $G_\alpha \cup G(n, p)$ contains a Hamilton cycle a.a.s.\ provided that 
$$p \ge \frac{(6+o(1))\log(1/\alpha)}{n}.$$  
Furthermore, they observed that the highly imbalanced complete bipartite graph $K_{\alpha n, (1-\alpha)n}$ provides a natural lower bound obstruction: any spanning cycle must use enough random edges to cover the isolated vertices in the large partition class.

Our main contribution is to determine the exact probabilistic threshold of this model by tightening the leading constant to its optimal first-order value. By applying the robust expansion properties of random graphs, combined with P\'osa's rotation techniques and a multi-stage sprinkling argument, we improve the leading constant from $6$ down to $1$. 

\begin{theorem}\label{thm:main}
Fix $\varepsilon>0$ and let $\alpha=\alpha(n)=o(1)$. Let $G_\alpha$ be any graph on $n$ vertices satisfying $\delta(G_\alpha)\ge\alpha n$. If
$$p\ge(1+\varepsilon)\frac{\log(1/\alpha)}{n},$$
then
$$\Pr(G_\alpha\cup G(n,p) \text{ is Hamiltonian}) \to 1.$$
\end{theorem}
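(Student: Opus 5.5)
We follow the standard Pósa rotation–extension scheme. First we build a sparse expanding subgraph of $G_\alpha\cup G(n,p)$, then we close the longest path into a Hamilton cycle using boosters obtained by sprinkling. Write $p=p_1+p_2$ with $p_1=(1+\varepsilon/2)\log(1/\alpha)/n$ and $p_2=\varepsilon\log(1/\alpha)/(2n)$, and expose $G_1\sim G(n,p_1)$ first. Then split $p_2$ into $n$ further rounds of probability $p_2/n$ each, or use a single round of $p_2$ with a union bound over boosters; this is the sprinkling.

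\textbf{Step 1 (minimum degree and small-set expansion).} The choice of the constant $1$ is sharp because of the following computation. A vertex $v$ has no $G_1$-edges into its deterministic neighbourhood $N_{G_\alpha}(v)$, of size at least $\alpha n$, with probability about $e^{-p_1 n}$. What matters is robust expansion of small sets, so we classify vertices as \emph{good} or \emph{bad}.
\begin{itemize}
\item A vertex is \emph{bad} if it has fewer than $k$ (a large constant) $G_1$-neighbours. This happens with probability $\alpha^{1+\varepsilon/2}$ up to polylog factors in $1/\alpha$. The expected number of bad vertices is therefore $n\alpha^{1+\varepsilon/3}=o(\alpha n)$.
\item A.a.s.\ every vertex has at least $\alpha n - o(\alpha n)$ neighbours in $G_\alpha$ that are good.
\item The bad vertices are spread out: no vertex has many bad $G_1$-neighbours, and bad vertices are pairwise far apart in $G_1$, in the usual sense of local sparsity.
\end{itemize}
We then choose a subgraph $\Gamma$ in which each bad vertex keeps two edges into good vertices, using its $G_\alpha$-neighbourhood, and each good vertex keeps $k$ random edges. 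We show $\Gamma$ is a $(n/4,2)$-expander in the sense $|N_\Gamma(S)\setminus S|\ge 2|S|$ for $|S|\le cn$. For sets of size up to $\alpha n$-ish we use the $G_\alpha$-degrees together with the sparseness of $G_1$, since $G_1$ has few edges inside small sets. For larger sets we use expansion of $G(n,p_1)$, which has average degree $\gg 1$; a set of size between $\alpha n$ and $n/4$ expands via random edges with failure probability summable in a union bound. This is the robust random expansion lemma alluded to in the abstract.

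\textbf{Step 2 (connectivity).} Using the $G_\alpha$ edges, which give every vertex $\alpha n$ neighbours, together with expansion, we show $\Gamma$ is connected. Any two linear-size sets are joined by $G_1$ edges, since $p_1\cdot(cn)^2\gg n$.

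\textbf{Step 3 (boosters and sprinkling).} By Pósa's booster lemma, a connected non-Hamiltonian $(cn,2)$-expander has at least $c^2n^2/2$ boosters. Adding one booster either increases the length of the longest path or creates a Hamilton cycle. We add at most $n$ boosters. Expansion is monotone, so it is preserved, and each round succeeds with probability $1-(1-p_2/n)^{c^2n^2/2}\ge 1-\exp(-\Omega(p_2 n))$. This bound is too weak when $\log(1/\alpha)$ is small. To fix this, we sprinkle differently: the boosters are needed only with respect to a sparse subgraph $\Gamma$ with $O(n)$ edges. By the standard argument, we take a union bound over all sparse expanders $\Gamma$ (at most $\binom{n^2}{\le kn}$ of them) that have no booster in $G(n,p_2)$. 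Each has probability $(1-p_2)^{c^2n^2/2}=e^{-\Omega(n\log(1/\alpha))}$, which beats $\binom{n^2}{kn}=e^{O(n\log n)}$ only if $\log(1/\alpha)\gg\log n$. In general this is not the case, so we must make $\Gamma$ depend on few choices. Instead we use the $\alpha n$-degree of $G_\alpha$ to shrink the count: each good vertex's $k$ edges are chosen among $G_1$-edges, which are themselves random.

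\textbf{Main obstacle.} The main difficulty is the expansion of sets of size around $\alpha n$ to $\alpha n\log(1/\alpha)$, where neither the deterministic degrees nor $G_1$ alone obviously suffice. A related difficulty is arranging the sprinkling so that the union bound closes when $\log(1/\alpha)$ is only slowly growing. I would first try splitting $G(n,p_2)$ into $\log(1/\alpha)$ independent rounds, each adding $\Theta(n)$ random edges, and running the Pósa extension deterministically in each round.
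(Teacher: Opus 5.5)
Your overall scheme (expose $G_1\sim G(n,p_1)$ with $p_1=(1+\varepsilon/2)\log(1/\alpha)/n$, get a connected $2$-expander, use P\'osa boosters, then sprinkle) is the right one. However, neither load-bearing step is completed, and in Step~3 you conclude yourself that the bounds you write down fail.

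\textbf{Sprinkling.} The difficulty here is not real. You already computed that, while the current graph is a non-Hamiltonian connected $(cn,2)$-expander, a round of probability $p_2/n$ contains a booster with probability $1-e^{-\Omega(p_2n)}=1-e^{-\Omega(\log(1/\alpha))}=1-o(1)$. Expansion and connectivity are monotone, and $G_\alpha\cup G_1$ already has them, so this bound holds conditionally on the whole history. You do not need all rounds to succeed. Take $2n$ rounds of probability $p_2/(2n)$: the number of failures is dominated by $\mathrm{Bin}(2n,o(1))$, so by Markov at least $n$ rounds succeed a.a.s., and $n$ boosters suffice. Equivalently, expose the $\Theta(\varepsilon n\log(1/\alpha))=\omega(n)$ sprinkled edges one at a time; each is a booster with probability bounded below by a constant, and Chernoff gives $n$ successes. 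The union bound over sparse expanders $\Gamma$ is the device for hitting-time statements with no slack. Here you have $\omega(n)$ spare edges, so $\Gamma$ is unnecessary.

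\textbf{Expansion.} This step has the same problem.
\begin{itemize}
\item Passing to a bounded-degree $\Gamma$ discards exactly what handles small sets. The bound $\delta(G_\alpha)\ge\alpha n$ gives $|N(X)\setminus X|\ge\alpha n-|X|+1\ge 2|X|$ for every $|X|\le\alpha n/3$, with no good/bad classification.
\item The range you call the main obstacle is handled by a first-moment bound in $G_1$ alone. Write $L=\log(1/\alpha)$ and $\lambda_1=p_1n=(1+\varepsilon/2)L$. For $K\alpha n/\lambda_1\le x\le a_0n/\lambda_1$ one has $\log\binom nx\le x\log(en/x)\le(1+o(1))Lx$. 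The binomial lower tail gives $\Pr(|N_{G_1}(X)\setminus X|\le\lambda_1x/K)\le\exp(-(1-\delta)\lambda_1x)$, with $\delta\to0$ as $a_0\to0$ and $K\to\infty$. Since $\lambda_1$ exceeds the entropy rate $L$ by a constant factor, the union bound closes; sets of size at least $a_0n/\lambda_1$ are routine.
\item Because $K\alpha n/\lambda_1\ll\alpha n/3$, the deterministic and random ranges overlap. Hence $G_\alpha\cup G_1$ is a $2$-expander up to $n/4$.
\end{itemize}
This comparison of $\lambda_1$ with $L$ is where the constant $1$ enters. Your Step~1 heuristic is off: the probability that $v$ has no $G_1$-edge into $N_{G_\alpha}(v)$ is about $e^{-p_1\alpha n}$, not $e^{-p_1n}$.

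Your $\Gamma$ also has a concrete defect. Bad vertices keep two $G_\alpha$-edges, and being far apart in $G_1$ does not stop two of them from keeping the same two neighbours. For example, in $K_{A,B}$ every vertex of $B$ has $G_\alpha$-neighbourhood $A$. So $\Gamma$ need not be a $2$-expander.

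Finally, both ranges degenerate when $\alpha n$ is bounded. You must first dispose of the case $(1+\varepsilon)\log(1/\alpha)\ge\log n+2\log\log n$, where $G(n,p)$ alone is Hamiltonian. Otherwise $\alpha n\gg\log n$, which the union bound uses.
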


The leading constant $1$ in Theorem~\ref{thm:main} is optimal in the regime where $\alpha n \to \infty$, exactly matching the bipartite obstruction (see Section~\ref{sec:sharpness} for the formal proposition). 

This improvement is of fundamental structural significance. By leaving no slack, it proves that the threshold is precisely governed by the bipartite obstruction $K_{\alpha n, (1-\alpha)n}$, establishing the exact point at which random perturbations overcome the deterministic bottleneck.

Beyond Hamilton cycles, research on randomly perturbed graphs has expanded to various spanning and extremal properties. For instance, the threshold for embedding bounded-degree spanning trees was investigated by Krivelevich, Kwan, and Sudakov \cite{KKS}, and fully resolved by Joos and Kim \cite{JK}. This was subsequently generalized to all spanning bounded-degree graphs by B\"ottcher, Montgomery, Parczyk, and Person \cite{BMPP}. There has also been extensive work on $F$-factors (tilings). Notably, Balogh, Treglown, and Wagner \cite{BTW} and Han, Morris, and Treglown \cite{HMT} explored the gap between the Hajnal-Szemer'edi theorem and the Johansson-Kahn-Vu theorem; very recently, this line of research culminated in the work of Antoniuk, Kam\v{c}ev, Reiher, and Tukara \cite{AKRT}, who established the complete exact threshold picture for clique factors. In a distinct but related direction of near-spanning structures, Cheng, Liu, Wang, and Yan \cite{CLWY} determined the sharp threshold for near-perfect triangle packings. The model has also been fruitfully applied to Ramsey properties, as demonstrated by Das and Treglown \cite{DT}.

\subsection*{Proof Outline and Organization}
The proof relies on splitting the random edges into two independent rounds. In Section~\ref{sec:tools}, we introduce the standard tools of P\'osa boosters and a sprinkling lemma. In Section~\ref{sec:expansion}, we prove a uniform random expansion lemma (Lemma~\ref{lem:expansion}) for $G(n,p)$. In Section~\ref{sec:proof}, we expose the first round of random edges and use the expansion lemma to show that the resulting union with $G_\alpha$ forms a robust, connected 2-expander. This structural property guarantees a large reservoir of P\'osa boosters. We then expose a second, much smaller round of random edges (sprinkling) to hit these boosters and close a Hamilton cycle. Finally, in Section~\ref{sec:sharpness}, we present the sharpness construction.

\subsection*{Notation}
Throughout the paper, all logarithms are natural. We write ``a.a.s.'' for ``asymptotically almost surely''. For a graph $G$ and a set $X\subseteq V(G)$, we write $N_{G}(X)\setminus X$ for the external neighbourhood of $X$ in $G$.
\medskip

\section{Two standard tools}\label{sec:tools}

In this section, we record two standard structural tools based on P\'osa's elementary rotation technique. First we introduce a classical P\'osa boosters lemma as follows.

\begin{lemma}[P\'osa boosters]\label{lem:posa}
Let $G$ be a connected non-Hamiltonian graph on $n$ vertices. Suppose that for some integer $k$,
$$|N_{G}(X)\setminus X|\ge 2|X| \quad \text{for every } X\subseteq V(G), \ |X|\le k.$$
Then $G$ has at least $(k+1)^{2}/2$ boosters, where a booster is a non-edge $e$ such that $G+e$ is Hamiltonian or has a path longer than a longest path of $G$.
\end{lemma}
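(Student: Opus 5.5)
The argument is Pósa's rotation–extension technique, carried out in the standard way. Fix a longest path $P=v_0v_1\cdots v_\ell$ in $G$. A \emph{rotation} with fixed endpoint $v_0$ uses an edge $v_\ell v_i$ with $i<\ell-1$: it replaces $P$ by the path $v_0\cdots v_i v_\ell v_{\ell-1}\cdots v_{i+1}$, which has the same vertex set and length and new endpoint $v_{i+1}$. Let $\mathrm{End}(v_0)$ be the set of endpoints (other than $v_0$) of all paths obtainable from $P$ by sequences of such rotations with $v_0$ fixed. The proof has three steps.

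\textbf{Step 1 (Pósa's lemma).} Set $U=\mathrm{End}(v_0)$. We show
$$N_G(U)\setminus U\subseteq U^-\cup U^+,$$
where $U^\pm$ are the predecessors and successors of $U$ along $P$. Consequently $|N_G(U)\setminus U|\le 2|U|-1$, because the endpoint $v_\ell\in U$ has no successor.

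The proof is the classical one. By maximality of $P$, every neighbour of an endpoint of a rotated path lies on that path. Take $u\in U$ with endpoint path $P_u$, and a neighbour $x\notin U\cup U^-\cup U^+$. Then both neighbours of $x$ along $P$ are not in $U$. Hence no rotation ever broke an edge at $x$, so $x$ has the same two neighbours in $P_u$ as in $P$. Rotating $P_u$ at the edge $ux$ then makes one of the $P$-neighbours of $x$ an endpoint, which puts it in $U$, a contradiction.

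This step is the main technical point, and the part that needs care: one must keep track of which $P$-edges survive a sequence of rotations. Everything else is counting.

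\textbf{Step 2 (size of $U$).} Since $P$ is longest, $U$ is nonempty. Suppose $|U|\le k$. Then the hypothesis gives $|N_G(U)\setminus U|\ge 2|U|$, contradicting Step 1. Hence $|\mathrm{End}(v_0)|\ge k+1$. The same holds for every path obtained from $P$ by rotations: it is also a longest path, so Step 1 applies to it. Thus for each $u\in\mathrm{End}(v_0)$ we get a path $P_u$ from $u$ to $v_0$; now rotating with $u$ fixed gives a set $\mathrm{End}(u)$ of size at least $k+1$ of other endpoints.

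\textbf{Step 3 (the non-edges are boosters).} Each pair $\{u,w\}$ with $w\in\mathrm{End}(u)$ is the endpoint pair of a longest path $Q$ with $V(Q)=V(P)$. We claim $uw\notin E(G)$ and that $uw$ is a booster.

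Consider $G+uw$. It contains the cycle $Q+uw$ on $V(P)$. If $V(P)=V(G)$, this is a Hamilton cycle. Otherwise, since $G$ is connected, some vertex outside the cycle is adjacent to a vertex of it. Opening the cycle at that vertex and appending the edge yields a path longer than $P$. So $uw$ is a booster.

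The same argument shows $uw$ cannot already be an edge of $G$. If $V(P)=V(G)$, the cycle $Q+uw$ would make $G$ Hamiltonian. Otherwise it would give a path in $G$ longer than $P$.

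\textbf{Counting.} There are at least $k+1$ choices of $u$ and, for each, at least $k+1$ choices of $w$. Each unordered pair is counted at most twice, so there are at least $(k+1)^2/2$ distinct boosters.
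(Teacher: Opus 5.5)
Your proof is correct and follows the same route as the paper. Pósa's rotation inequality $|N_G(U)\setminus U|\le 2|U|-1$ gives $|\mathrm{End}(v_0)|\ge k+1$ and $|\mathrm{End}(u)|\ge k+1$, and the resulting $(k+1)^2$ ordered endpoint pairs yield $(k+1)^2/2$ boosters. You also prove the rotation inequality instead of citing it, and you check explicitly that each pair $uw$ is a non-edge (otherwise $Q+uw$ would already give a Hamilton cycle or a longer path in $G$), which the paper's ``if absent'' phrasing leaves implicit when it counts all pairs.
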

\begin{proof}
This is the usual P\'osa rotation argument. Let $P=v_{0}v_{1}\dots v_{l}$ be a longest path of $G$. For a fixed endpoint $v_{0}$, let $R(v_{0})$ be the set of endpoints obtainable from $P$ by a sequence of rotations that keep $v_{0}$ fixed. P\'osa's elementary observation gives
$$|N_{G}(R(v_{0}))\setminus R(v_{0})|\le 2|R(v_{0})|-1.$$
If $|R(v_{0})|\le k$, this contradicts the assumed expansion. Hence $|R(v_{0})|\ge k+1$. For each $x\in R(v_{0})$, repeat the same argument with $x$ fixed. We obtain a set $R(x)$ of at least $k+1$ possible opposite endpoints. For every $y\in R(x)$, the edge $xy$, if absent, is a booster: adding $xy$ closes a cycle through all vertices of the corresponding longest path, and since $G$ is connected and non-Hamiltonian, this either is a Hamilton cycle or can be extended to a longer path. Counting the resulting unordered pairs gives at least $(k+1)^{2}/2$ boosters.
\end{proof}

Another important tool is that for a connected $2$-expander, it suffices to add $\omega(n)$ random edges to obtain a Hamilton cycle a.a.s. as below.

\begin{lemma}[Sprinkling boosters]\label{lem:sprinkling}
Let $G$ be a graph on $n$ vertices satisfying
$$|N_{G}(X)\setminus X|\ge 2|X| \quad \text{for every } X\subseteq V(G), \ |X|\le n/4,$$
and suppose that $G$ is connected. Let $R\sim G(n,\lambda/n)$, where $\lambda=\lambda(n)\rightarrow\infty$ and $\lambda=O(\log n)$. Then $G\cup R$ is Hamiltonian a.a.s.
\end{lemma}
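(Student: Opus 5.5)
Split $R$ into $n$ independent rounds $R_1,\dots,R_n$, each distributed as $G(n,q)$, where $1-p=(1-q)^n$ and $p=\lambda/n$; then $q\ge \lambda/(2n^2)$ for large $n$. Since $R_1\cup\dots\cup R_n\sim G(n,\lambda/n)$, it suffices to show that $G\cup R_1\cup\dots\cup R_n$ is Hamiltonian a.a.s. Set $G_0=G$ and $G_i=G_{i-1}\cup R_i$. Adding edges preserves connectivity and can only enlarge external neighbourhoods. Hence every $G_i$ is connected and satisfies the expansion hypothesis of Lemma~\ref{lem:posa} with $k=\lfloor n/4\rfloor$, because $|N_{G_i}(X)\setminus X|\ge |N_G(X)\setminus X|\ge 2|X|$ for $|X|\le n/4$.

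Call round $i$ successful if $G_{i-1}$ is Hamiltonian or $R_i$ contains a booster of $G_{i-1}$. Condition on $G_{i-1}$, which is determined by $R_1,\dots,R_{i-1}$. If $G_{i-1}$ is non-Hamiltonian, Lemma~\ref{lem:posa} gives at least $(n/4)^2/2=n^2/32$ boosters. These are fixed non-edges of $G_{i-1}$, and $R_i$ is independent of them, so
$$\Pr(\text{round } i \text{ fails}\mid G_{i-1})\le (1-q)^{n^2/32}\le \exp(-qn^2/32)\le \exp(-\lambda/64)=:1-\rho,$$
where $\rho\to 1-e^{0}$ only if $\lambda$ stays bounded; since $\lambda\to\infty$ we have $\rho\to 1$, and in particular $\rho\ge c>0$ for some constant $c$. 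Each successful round either finds $G_{i-1}$ already Hamiltonian or strictly increases the length of a longest path, or makes the graph Hamiltonian. The longest path length is at most $n-1$, so at most $n$ successful rounds are needed.

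The remaining step is the counting, and it is the main obstacle. The per-round success probability, conditioned on the past, is at least $\rho$. So the number of successes stochastically dominates $\mathrm{Bin}(n,\rho)$, and $n$ rounds are not obviously enough. I would fix this by sharpening the split: use $m=\lceil 2n/\rho\rceil$ rounds of probability $q'$ with $(1-q')^m=1-p$, so $q'\ge \lambda\rho/(4n^2)$. Then the per-round failure probability is at most $e^{-\lambda\rho/128}$, which still tends to $0$; indeed, since $\lambda\to\infty$, the success probability $\rho'\to 1$ and $m\approx 2n$. By Chernoff, $\mathrm{Bin}(m,\rho')$ is at least $n$ with probability $1-e^{-\Omega(n)}$. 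On that event at least $n$ rounds succeed, and since each success increases the longest path length or closes a Hamilton cycle, $G\cup R$ is Hamiltonian.

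Two details need checking. First, each booster added either makes the graph Hamiltonian or gives a longer path. Once the longest path is Hamiltonian, a connected graph with the given expansion that is still non-Hamiltonian keeps generating boosters that close a Hamilton cycle. Second, the assumption $\lambda=O(\log n)$ only guarantees $q<1$ and plays no further role. The domination argument uses the standard fact that if each trial succeeds with conditional probability at least $\rho'$ given the history, then the success count dominates a binomial.
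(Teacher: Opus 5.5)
Your proof is correct. It follows the paper's skeleton: Lemma~\ref{lem:posa}, the longest-path potential, binomial domination and Chernoff. The difference is how you decompose the randomness.

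\textbf{The paper's route.} It passes to the uniform model. It exposes $m=\lfloor \lambda n/4\rfloor$ edges one at a time without replacement, which is justified by the a.a.s.\ containment of a uniform $m$-edge graph in $G(n,\lambda/n)$. Each new edge is a booster with conditional probability at least an absolute constant. The hypothesis $\lambda\to\infty$ is spent on making the number of trials $\omega(n)$, so that a constant-rate binomial exceeds $n$.

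\textbf{Your route.} You write $G(n,\lambda/n)$ exactly as a union of independent $G(n,q)$ rounds. This removes the need for any $G(n,p)$--$G(n,m)$ coupling, and conditioning on the past is immediate from independence. You use $\lambda\to\infty$ differently: each round contains about $\lambda/4$ random edges against at least $n^2/32$ boosters, so it succeeds with probability $1-o(1)$. Consequently only about $2n$ rounds are needed.

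As you note, your version never uses $\lambda=O(\log n)$. In fact the paper's version does not need it either. Boosters are non-edges of the current graph, so they automatically lie among the unexposed pairs, and the per-edge success probability is at least $(n^2/40)/\binom{n}{2}$ regardless of how many edges have been exposed.

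\textbf{Cosmetic fixes.}
\begin{itemize}
\item The clause ``$\rho\to 1-e^{0}$ only if $\lambda$ stays bounded'' is meaningless and should be deleted.
\item The detour through $m=\lceil 2n/\rho\rceil$ is unnecessary. Since $\rho\to 1$, you can take $2n$ rounds from the outset. Bernoulli's inequality then gives $q\ge \lambda/(2n^2)$, hence per-round failure probability at most $e^{-\lambda/64}\to 0$. Chernoff then yields at least $n$ successes among the $2n$ rounds with probability $1-e^{-\Omega(n)}$.
\item The condition $q<1$ needs only $\lambda<n$, not $\lambda=O(\log n)$.
\end{itemize}
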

\begin{proof}
It is enough to expose $m=\lfloor\frac{\lambda n}{4}\rfloor$ random edges chosen uniformly without replacement from the complete graph, because $G(n,\lambda/n)$ contains such a random $m$-edge graph a.a.s. Let $G_{i}$ be the graph after the first $i$ exposed edges have been added to $G$. As long as $G_{i}$ is not Hamiltonian, it is still connected and still satisfies the same expansion condition. 

By Lemma~\ref{lem:posa}, with $k=\lfloor n/4\rfloor$, it has at least
$$\frac{(k+1)^{2}}{2}\ge\frac{n^{2}}{40}$$
boosters for all large $n$. Since $m=O(n \log n)=o(n^{2})$, at each non-Hamiltonian step the next exposed random edge is a booster with conditional probability at least a positive absolute constant, say $1/30$. Thus the number of successful booster steps stochastically dominates a binomial random variable with parameters $m$ and $1/30$. Its expectation is $\Omega(\lambda n)=\omega(n)$, so by Chernoff's inequality it is at least $n$ a.a.s. Each successful booster either creates a Hamilton cycle or increases the length of a longest path. Since the length of a longest path can increase at most $n$ times, $G\cup R$ is Hamiltonian a.a.s.
\end{proof}

\medskip

\section{The random expansion lemma}\label{sec:expansion}

We establish a uniform expansion property for the random graph $G(n,p)$, which serves as the core technical engine to ensure the required 2-expansion for P\'osa's lemma.

\begin{lemma}[Uniform random expansion]\label{lem:expansion}
Fix $\eta>0$. Let $\alpha=o(1)$, $L=\log(1/\alpha)$, $d=\lceil\alpha n\rceil$, and $\lambda=(1+\eta)L$. Assume $d\gg \log n$. There is a constant $K=K(\eta)>0$ such that $R\sim G(n,\lambda/n)$ has the following three properties a.a.s.
\begin{enumerate}[label=\textnormal{(E\arabic*)}]
    \item \label{E1} For every set $X$ with $\frac{Kd}{\lambda}\le|X|\le\frac{Kn}{2\lambda}$, we have $|N_{R}(X)\setminus X|>\frac{\lambda|X|}{K}$.
    \item \label{E2} For every set $X$ with $\frac{Kn}{2\lambda}<|X|\le\frac{n}{4}$, we have $|N_{R}(X)\setminus X|>\frac{n}{2}$.
    \item \label{E3} Every two disjoint sets $A, B\subseteq V$ with $|A|,|B|\ge n/4$ have at least one $R$-edge between them.
\end{enumerate}
\end{lemma}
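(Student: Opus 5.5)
All three properties follow from first-moment union bounds over sets, with Chernoff-type estimates for each fixed set. The tool for (E1) and (E2) is the following observation. For a fixed $X$ with $|X|=x$, each vertex $v\notin X$ is adjacent to $X$ independently with probability
$$q_x=1-(1-\lambda/n)^{x}.$$
So $|N_R(X)\setminus X|\sim\mathrm{Bin}(n-x,q_x)$. Failure of expansion means this binomial is below some target $t$, which is the event that some set $Y$ of $n-x-t$ outside vertices receives no edge from $X$. Rather than Chernoff, I would bound this directly:
$$\Pr\bigl(|N_R(X)\setminus X|\le t\bigr)\le\binom{n}{t}(1-\lambda/n)^{x(n-x-t)}\le\binom{n}{t}\exp\Bigl(-\frac{\lambda x(n-x-t)}{n}\Bigr).$$
Then I take a union bound over the $\binom{n}{x}$ choices of $X$ and sum over $x$.

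\textbf{(E3).} Fix disjoint $A,B$ of size $\lceil n/4\rceil$; it suffices to treat sets of exactly this size. The probability that there is no edge between them is
$$(1-\lambda/n)^{n^2/16}\le e^{-\lambda n/16}.$$
The number of pairs $(A,B)$ is at most $4^n$. Since $\lambda=(1+\eta)\log(1/\alpha)\to\infty$, the union bound gives $4^ne^{-\lambda n/16}=o(1)$.

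\textbf{(E2).} Take $Kn/(2\lambda)<x\le n/4$ and $t=n/2$. Then $n-x-t\ge n/4$, so the exponent is at least $\lambda x/4$. The count is at most $\binom{n}{x}\binom{n}{n/2}\le 2^{2n}$. Since $\lambda x/4>Kn/8$, choosing $K$ large (say $K\ge 100$) makes each term $e^{-\Omega(n)}$. Summing over at most $n$ values of $x$ still gives $o(1)$.

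\textbf{(E1).} Take $Kd/\lambda\le x\le Kn/(2\lambda)$ and $t=\lambda x/K\le n/2$. Now $n-x-t\ge n(1-o(1)-1/2)$, since $x\le Kn/(2\lambda)=o(n)$ because $\lambda\to\infty$. So the exponent is at least about $\lambda x/3$. The counting factor is
$$\binom{n}{x}\binom{n}{t}\le\Bigl(\frac{en}{x}\Bigr)^{x}\Bigl(\frac{eKn}{\lambda x}\Bigr)^{\lambda x/K}.$$
Its logarithm is $x\log(en/x)+(\lambda x/K)\log(eKn/(\lambda x))$. Using $x\ge Kd/\lambda\ge K\alpha n/\lambda$, we get
$$\log(n/x)\le\log(1/\alpha)+\log(\lambda/K)\le L+\log\lambda.$$
Since $\lambda\approx L$, this is $(1+o(1))L$. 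Similarly $\log(Kn/(\lambda x))\le\log(1/\alpha)=L$. So the logarithm of the count is at most
$$x(1+o(1))L+\frac{\lambda x}{K}(L+1).$$

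\textbf{Main obstacle.} The second term, of order $\lambda xL/K$, threatens to beat the exponent $\lambda x/3$, because $L$ is unbounded. So the naive choice $t=\lambda x/K$ with $K$ constant fails, and I expect this to be the delicate point. To handle it I would redo the estimate with the Chernoff bound for the binomial lower tail instead of the crude $\binom{n}{t}$ bound. In the range $x\le Kn/(2\lambda)$ we have $q_x\ge(1-o(1))\min(\lambda x/n,\,1-e^{-K/2})$, so the mean number of outside neighbours is $\mu\ge c_K\lambda x$ with $c_K>0$. The target $t=\lambda x/K$ is a small fraction $1/(Kc_K)$ of $\mu$. The lower-tail bound gives
$$\Pr(\mathrm{Bin}\le\delta\mu)\le\exp\bigl(-\mu(1-\delta+\delta\log\delta)\bigr),$$
which is $e^{-(1-o_K(1))\mu}$ for small $\delta$. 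This costs no $L$ factor. For small $x$ we have $\mu\approx\lambda x=(1+\eta)Lx$, which beats the $\binom{n}{x}$ cost of $e^{(1+o(1))Lx}$ provided $\delta$ is small enough that $(1-\delta+\delta\log\delta)(1+\eta)>1+\eta/2$; this fixes how large $K$ must be in terms of $\eta$. In this case the $1+\eta$ slack is exactly what is used. For $x$ near $Kn/(2\lambda)$, $\mu$ is of order $n$ while $\log\binom{n}{x}\lesssim(Kn/\lambda)\log\lambda=o(n)$, which is easy. Each term is then at most $e^{-(\eta/3)Lx}$, and since $x\ge Kd/\lambda\gg\log n/L$, summing over $x$ gives $o(1)$. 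This is where the hypothesis $d\gg\log n$ is used.
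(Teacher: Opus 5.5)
Your proposal is correct and follows essentially the paper's route: (E2) and (E3) use the same crude union bounds. (E1) uses the binomial lower-tail Chernoff bound against $\binom{n}{x}\le e^{(1+o(1))Lx}$ (from $x\ge K\alpha n/\lambda$), split into two regimes: $\lambda x/n$ small, where the $1+\eta$ slack is spent, and $\lambda x/n$ bounded below, where $e^{-\Omega(n)}$ beats $e^{o(n)}$ sets; $d\gg\log n$ is used only to sum over $x$.

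One harmless slip: $q_x\ge(1-o(1))\min(\lambda x/n,1-e^{-K/2})$ is false, already at $\lambda x/n=1$. Concavity gives only $q_x\gtrsim (2/K)(1-e^{-K/2})\,\lambda x/n$. So near $x=Kn/(2\lambda)$ the target $t$ is about $\mu/2$, not a small fraction of $\mu$. A ratio bounded away from $1$ is all that regime needs, so the argument still goes through.
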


\begin{proof}
We choose $K$ sufficiently large in terms of $\eta$. Let $X$ be fixed with $|X|=x$, and put $Z_{X}=|N_{R}(X)\setminus X|$. Then $Z_{X}\sim \text{Bin}(n-x,q_{x})$ where $q_{x}=1-(1-\lambda/n)^{x}$.

\begin{claim}\label{claim:e1}
Property \ref{E1} holds a.a.s.
\end{claim}
\begin{poc}
Write $a=\frac{\lambda x}{n}$. Then $K\alpha \le a\le K/2$. Choose a small constant $a_{0}>0$. If $a\le a_{0}$, then $\mathbb{E}Z_{X}=(1+O(a_{0})+o(1))an,$ and the standard binomial lower-tail estimate gives
$$\Pr\left(Z_{X}\le\frac{an}{K}\right)\le \exp\left[-an\left(1-O(a_{0})-\frac{1+\log K}{K}+o(1)\right)\right].$$
On the other hand,
$$\log\binom{n}{x}\le x \log\frac{en}{x}=\frac{an}{\lambda}\log\frac{e\lambda}{a}.$$
Since $a\ge K\alpha$ and $\lambda=(1+\eta)L,$ we have
$$\frac{1}{\lambda}\log\frac{e\lambda}{a}\le\frac{1+o(1)}{1+\eta}.$$
Choosing first $a_{0}$ sufficiently small and then $K$ sufficiently large, the union-bound exponent is at most $-can$ for some $c=c(\eta)>0$. Since $an=\lambda x\ge Kd$ and $d\gg \log n$, the total contribution from the subrange $a\le a_{0}$ is $o(1)$.

If $a\ge a_{0}$, then $\mathbb{E}Z_{X}=\Omega(n)$, while the threshold $an/K$ is a fixed smaller fraction of $\mathbb{E}Z_{X}$ once $K$ is chosen large. Thus $\Pr(Z_{X}\le an/K)\le \exp(-\Omega(n))$. The number of possible sets $X$ in the present range is $\exp(o(n))$, because $x=O(n/\lambda)$ and $\lambda\rightarrow\infty$. Hence this subrange also contributes $o(1)$. This proves \ref{E1}.
\end{poc}

\begin{claim}\label{claim:e2}
Property \ref{E2} holds a.a.s.
\end{claim}
\begin{poc}
If $Z_{X}\le n/2$ and $x\le n/4,$ then at least $n/4$ vertices outside $X$ have no $R$-neighbour in $X$. Hence, for fixed $X$,
$$\Pr(Z_{X}\le n/2)\le\binom{n}{n/4}\exp\left(-\frac{\lambda x}{4}\right).$$
Taking a union bound over all $X$ with $Kn/(2\lambda)<x\le n/4$ gives at most
$$n \exp\left(2H(1/4)n-\frac{Kn}{8}\right),$$
where $H(p)=-p\log p-(1-p)\log(1-p)$ is the binary entropy function. Taking $K>16H(1/4)$ proves \ref{E2}.
\end{poc}

\begin{claim}\label{claim:e3}
Property \ref{E3} holds a.a.s.
\end{claim}
\begin{poc}
For fixed disjoint $A, B$ with $|A|,|B|\ge n/4$, the probability of having no edges is
$$\Pr(e_{R}(A,B)=0)\le \exp(-\lambda n/16).$$
There are at most $4^{n}$ ordered choices of such pairs, and $\lambda\rightarrow\infty$. Thus the union bound proves \ref{E3} as required.
\end{poc}

The three claims complete the proof of the lemma.
\end{proof}

\medskip

\section{Proof of Theorem \ref{thm:main}}\label{sec:proof}

Now we are ready to establish Theorem \ref{thm:main} by a two-round sprinkling.

\begin{proof}[Proof of Theorem \ref{thm:main}]
Set $L=\log(1/\alpha)$. By monotonicity, it is enough to prove the result for $p_{0}=(1+\varepsilon)\frac{L}{n}$. Put $\lambda_{0}=(1+\varepsilon)L.$ If $\lambda_{0}\ge \log n+2 \log \log n,$ then $G(n,p_{0})$ alone is Hamiltonian a.a.s.\ by the classical Hamiltonicity threshold for the binomial random graph. Hence we may assume
$$\lambda_{0}<\log n+2 \log \log n.$$
Then $d := \lceil\alpha n\rceil \ge n \exp(-L) \ge n^{\varepsilon/(1+\varepsilon)}(\log n)^{-2/(1+\varepsilon)},$ and in particular $d\gg \log n.$

Now split the random graph into two independent rounds. Let
$$\lambda_{1}=(1+\varepsilon/2)L, \quad \lambda_{2}=\frac{\varepsilon L}{4},$$
and let $R_{1}\sim G(n,\lambda_{1}/n)$, $R_{2}\sim G(n,\lambda_{2}/n)$ be independent. Since
$$\frac{\lambda_{1}}{n}+\frac{\lambda_{2}}{n}\le(1+3\varepsilon/4)\frac{L}{n}<p_{0},$$
we may couple $R_{1}\cup R_{2}$ as a subgraph of $G(n,p_{0})$. It is therefore enough to show that $H:=G_\alpha\cup R_{1}\cup R_{2}$ is Hamiltonian a.a.s.

Apply Lemma~\ref{lem:expansion} to $R_{1}$ with $\eta=\varepsilon/2$, and fix an outcome of $R_{1}$ satisfying \ref{E1}--\ref{E3}. We claim that $H_{1}:=G_\alpha\cup R_{1}$ is a connected 2-expander up to size $n/4;$ that is,
\begin{claim}
$H_{1}:=G_\alpha\cup R_{1}$ is connected and satisfies
 $$|N_{H_{1}}(X)\setminus X|\ge 2|X| \quad \text{for all } X\subseteq V, \ 1\le|X|\le n/4.$$ 
\end{claim}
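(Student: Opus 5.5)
We split by the size of $X$ and use the minimum degree of $G_\alpha$ for small sets and the expansion properties \ref{E1}--\ref{E3} of $R_1$ (with $\eta=\varepsilon/2$, $\lambda=\lambda_1$, and $K=K(\varepsilon/2)$) for larger ones. Since $N_{H_1}(X)\supseteq N_{G_\alpha}(X)\cup N_{R_1}(X)$, it suffices in each range to exhibit one of the two graphs giving the required external neighbourhood.

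\emph{Small sets.} Let $1\le|X|\le \frac{Kd}{\lambda_1}$. Pick any $v\in X$. It has at least $\alpha n$ neighbours in $G_\alpha$, and at most $|X|-1$ of them lie in $X$. Hence
$$|N_{H_1}(X)\setminus X|\ge \alpha n-|X|.$$
Since $\lambda_1=(1+\varepsilon/2)L\to\infty$, we have $|X|\le Kd/\lambda_1=o(d)$. So the right-hand side is $(1-o(1))d\ge 3|X|$ for large $n$ (we only need $3Kd/\lambda_1\le d-o(d)$, which holds because $\lambda_1\to\infty$). This is the one place where the deterministic graph is essential.

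\emph{Medium sets.} For $\frac{Kd}{\lambda_1}\le|X|\le \frac{Kn}{2\lambda_1}$, property \ref{E1} gives
$$|N_{R_1}(X)\setminus X|>\frac{\lambda_1|X|}{K}\ge 2|X|,$$
since $\lambda_1/K\to\infty$.

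\emph{Large sets.} For $\frac{Kn}{2\lambda_1}<|X|\le n/4$, property \ref{E2} gives $|N_{R_1}(X)\setminus X|>n/2\ge 2|X|$.

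Together the three ranges cover $1\le|X|\le n/4$, which proves the expansion.

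\emph{Connectivity.} Suppose $H_1$ is disconnected and let $C$ be a smallest component, so $|C|\le n/2$. If $|C|\le n/4$, then by the expansion just proved $C$ has an external neighbour, a contradiction. Otherwise $n/4<|C|\le n/2$, so both $C$ and $V\setminus C$ have size at least $n/4$. By \ref{E3} there is an $R_1$-edge between them, again a contradiction. Hence $H_1$ is connected.

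The only delicate point is the small-set range. There we must check that the threshold $Kd/\lambda_1$ of \ref{E1} is small compared with $d$, so that the minimum degree of $G_\alpha$ alone yields $2$-expansion. This holds because $K$ is a constant and $\lambda_1\to\infty$, which follows from $\alpha=o(1)$. I expect no real obstacle beyond this bookkeeping.
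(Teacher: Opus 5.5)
Your proof is correct and follows the paper's argument. You use the minimum degree of $G_\alpha$ for small sets, \ref{E1} and \ref{E2} for medium and large sets, and \ref{E3} for connectivity; the only cosmetic differences are that you cut the small range at $Kd/\lambda_1$ instead of $d/3$ (the paper notes $Kd/\lambda_1<d/3$), and you make the connectivity step explicit by taking a smallest component.
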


\begin{poc}
Indeed, if $1\le|X|\le d/3$, then any vertex of $X$ has at least $d$ neighbours in $G_\alpha$, at most $|X|-1$ of which lie in $X$. Therefore
$$|N_{H_{1}}(X)\setminus X|\ge d-|X|+1\ge 2|X|.$$

For larger $X$, note that $\lambda_{1}\rightarrow\infty$ so for large $n$ we have $Kd/\lambda_{1}<d/3$. Thus $|X|>d/3$ implies $|X|\ge Kd/\lambda_{1}$. If $|X|\le Kn/(2\lambda_{1})$, then \ref{E1} gives
$$|N_{H_{1}}(X)\setminus X|\ge|N_{R_{1}}(X)\setminus X|>\frac{\lambda_{1}|X|}{K}\ge 2|X|$$
again for large $n$. If $Kn/(2\lambda_{1})<|X|\le n/4$ then \ref{E2} gives
$$|N_{H_{1}}(X)\setminus X|\ge|N_{R_{1}}(X)\setminus X|>n/2\ge 2|X|.$$
So $H_{1}$ has the required expansion.

It remains to check connectedness. The expansion just proved rules out any component of size at most $n/4$. If $H_{1}$ were disconnected, then some cut would separate two vertex sets each of size at least $n/4$. This contradicts \ref{E3}, because any $R_{1}$-edge between those two sets is also an $H_{1}$-edge. Thus $H_{1}$ is connected.
\end{poc}

Finally, since $\lambda_{2}=\frac{\varepsilon L}{4}\rightarrow\infty$ and $\lambda_{2}=O(\log n)$ under the present assumption $\lambda_{0}<\log n+2 \log \log n$, Lemma~\ref{lem:sprinkling} applied to $G=H_{1}$ and $R=R_{2}$ shows that $H_{1}\cup R_{2}=G_\alpha\cup R_{1}\cup R_{2}$ is Hamiltonian a.a.s. This proves the theorem.
\end{proof}

\medskip

\section{Sharpness of the constant}\label{sec:sharpness}
The leading constant $1$ cannot be improved in the main range where the deterministic minimum degree is genuinely growing, for instance when $\alpha n\rightarrow\infty$.

\begin{proposition}[Bipartite obstruction]
Assume $\alpha=o(1)$ and $\alpha n\rightarrow\infty$. For every fixed $\eta>0$, there is an $n$-vertex graph $G_\alpha$ with $\delta(G_\alpha)\ge\alpha n$ such that, if
$$p=(1-\eta)\frac{\log(1/\alpha)}{n},$$
then $G_\alpha\cup G(n,p)$ is not Hamiltonian a.a.s.
\end{proposition}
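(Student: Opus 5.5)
We take $G_\alpha$ to be the complete bipartite graph $K_{m,n-m}$ with parts $A$ and $B$, where $|A|=m=\lceil\alpha n\rceil$ and $|B|=n-m$. Its minimum degree is $m\ge\alpha n$. Since $\alpha=o(1)$ we have $m<n/2$ for large $n$.

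The argument rests on a counting constraint. Suppose $C$ is a Hamilton cycle in $G_\alpha\cup G(n,p)$, and let $b$ be the number of vertices of $B$ that have no $G(n,p)$-neighbour inside $B$. Each such vertex has both of its cycle edges going into $A$. Counting cycle edges incident to $A$ gives at most $2|A|=2m$, so $2b\le 2m$, that is, $b\le m$. It therefore suffices to show that a.a.s.\ more than $m$ vertices of $B$ are isolated in $G(n,p)[B]$.

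Let $I$ be the number of vertices of $B$ isolated in $G(n,p)[B]$, and write $N=n-m$. The expectation is
$$\mathbb{E} I=N(1-p)^{N-1}=(1-o(1))\,n\exp(-pn(1+o(1))).$$
Here $pn=(1-\eta)\log(1/\alpha)$, and this requires $p=o(1)$. That holds because $\alpha n\to\infty$ forces $\log(1/\alpha)<\log n$. Hence
$$\mathbb{E} I=n\,\alpha^{(1-\eta)(1+o(1))}=\alpha n\cdot\alpha^{-\eta+o(1)}.$$
The error term needs care: $p^2n\cdot\log(1/\alpha)$-type corrections and the $pm$ term are $o(\log(1/\alpha))$, since $p\to0$ and $m/n\to0$. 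So $\mathbb{E} I\ge \alpha n\cdot\alpha^{-\eta/2}$, which is $\omega(\alpha n)$ because $\alpha\to0$.

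For concentration we use the second moment. For distinct $u,v\in B$,
$$\Pr(u,v\text{ both isolated})=(1-p)^{2N-3},$$
so $\mathrm{Var}(I)\le \mathbb{E} I+N^2(1-p)^{2N-4}\bigl((1-p)^{-1}-1\bigr)$. This gives
$$\mathrm{Var}(I)\le \mathbb{E} I+O(p)(\mathbb{E} I)^2.$$
By Chebyshev, $I\ge \mathbb{E} I/2$ a.a.s., because $\mathbb{E} I\to\infty$ (it is at least $\alpha n\to\infty$) and $p\to0$. Then $I\ge \tfrac12\alpha^{-\eta/2}\alpha n>m+1$ for large $n$, so no Hamilton cycle exists.

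The main obstacle is making the asymptotics of $\mathbb{E} I$ rigorous uniformly over all regimes of $\alpha$. This means controlling $(1-p)^{N}$ against $e^{-pn}$ when $pn$ may be as large as about $\log n$. The key point is $N p^2=O(\log^2 n/n)=o(1)$, so $(1-p)^N=e^{-pN+O(p^2N)}=(1+o(1))e^{-pN}$. The remaining factor $e^{pm}$ is bounded by $e^{p\alpha n+p}=\alpha^{-o(1)}$, which is harmless.
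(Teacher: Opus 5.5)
Your proposal is correct and follows the paper's proof essentially step for step. It uses the same $K_{A,B}$ construction with $|A|=\lceil\alpha n\rceil$, the same first- and second-moment count of vertices of $B$ isolated in $G(n,p)[B]$, and the same degree count at $A$ showing at most $|A|$ such vertices fit on a Hamilton cycle; your explicit treatment of $(1-p)^{N}$ versus $e^{-pn}$ (via $Np^2=o(1)$ and $e^{pm}\ge 1$) just supplies asymptotics the paper states without detail.
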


\begin{proof}
Let $A, B$ be a partition of the vertex set with $|A|=\lceil\alpha n\rceil$ and $|B|=n-|A|$, and let $G_\alpha=K_{A,B}$. Then $\delta(G_\alpha)\ge\alpha n$. Let $R\sim G(n,p)$, and let $Y$ be the number of vertices of $B$ that have no $R$-neighbour inside $B$. Then
$$\mathbb{E}Y=|B|(1-p)^{|B|-1}=(1+o(1))n\alpha^{(1-\eta)(1-o(1))}.$$
The assumption $n\alpha\rightarrow\infty$ implies $\mathbb{E}Y\rightarrow\infty$, and a standard second-moment calculation for isolated vertices in a binomial random graph gives $Y=(1+o(1))\mathbb{E}Y$ a.a.s. Moreover, since $\alpha=o(1)$,
$$\frac{\mathbb{E}Y}{|A|}=\alpha^{-\eta+o(1)}\rightarrow\infty.$$
Hence $Y>|A|$ a.a.s. 

Every vertex counted by $Y$ has no edge to $B$ in $G_\alpha\cup R;$ in any Hamilton cycle both of its two cycle-neighbours would therefore have to lie in $A$. Since the vertices of $A$ contribute only $2|A|$ incident cycle-edges in total, at most $|A|$ such vertices can be accommodated. This contradicts $Y>|A|$. Therefore $G_\alpha\cup R$ is not Hamiltonian a.a.s.
\end{proof}

\begin{remark}
If $\alpha n$ is bounded, the usual random-graph threshold around $(\log n)/n$ may dominate the lower-bound example above. The theorem remains true because the case $\lambda\ge \log n+2\log\log n$ is already handled by the random graph alone.
\end{remark}

\medskip

\section*{Acknowledgement}

We sincerely thank Alberto Espuny D\'iaz for pointing out that the sharp threshold for Hamilton cycles in the randomly perturbed dense case (where $\alpha > 0$ is constant) remains an open problem, except for graphs very close to Dirac's condition. We also thank Jie Han and Hong Liu for helpful discussions. 

Z. Yan was supported by the Institute for Basic Science (IBS-R029-C4).

%%%%%%%%%%%%%%%%%%%%%%%%%%%%%%%%%%%%%%%%%%%%%%%%%%%%%%%%%%%%%%%%%%%%%%%%%%%%%%%%%%%%%%%%%%%%%%%%%%%%%%%%%

\medskip

\newcommand{\etalchar}[1]{$^{#1}$}


\begin{thebibliography}{HKMM{\etalchar{+}}21}

\bibitem[AKRT26]{AKRT}
S.~Antoniuk, N.~Kam\v{c}ev, C.~Reiher, and T.~P. Tukara.
\newblock The complete picture for clique factors in randomly perturbed graphs.
\newblock {\em arxiv:2603.22081}, 2026.

\bibitem[BFM03]{BFM}
T.~Bohman, A.~Frieze, and R.~Martin.
\newblock How many random edges make a dense graph {H}amiltonian?
\newblock {\em Random Structures Algorithms}, 22(1):33--42, 2003.

\bibitem[BMPP20]{BMPP}
J.~B\"{o}ttcher, R.~Montgomery, O.~Parczyk, and Y.~Person.
\newblock Embedding spanning bounded degree graphs in randomly perturbed graphs.
\newblock {\em Mathematika}, 66(2):422--447, 2020.

\bibitem[BTW19]{BTW}
J.~Balogh, A.~Treglown, and A.~Z. Wagner.
\newblock Tilings in randomly perturbed dense graphs.
\newblock {\em Combin. Probab. Comput.}, 28(2):159--176, 2019.

\bibitem[CLWY26]{CLWY}
X.~Cheng, H.~Liu, L.~Wang, and Z.~Yan.
\newblock Triangle packings in randomly perturbed graphs.
\newblock {\em arxiv:2604.25250}, 2026.

\bibitem[DT20]{DT}
S.~Das and A.~Treglown.
\newblock Ramsey properties of randomly perturbed graphs: cliques and cycles.
\newblock {\em Combin. Probab. Comput.}, 29(6):830--867, 2020.

\bibitem[EDR25]{ER}
A.~Espuny~D\'{\i}az and R.V. Razafindravola.
\newblock How many random edges make an almost-{D}irac graph {H}amiltonian?
\newblock {\em Electron. J. Combin.}, 32(4):Paper No. 4.47, 15, 2025.

\bibitem[HKMM{\etalchar{+}}21]{HMMMP}
M.~Hahn-Klimroth, G.~S. Maesaka, Y.~Mogge, S.~Mohr, and O.~Parczyk.
\newblock Random perturbation of sparse graphs.
\newblock {\em Electron. J. Combin.}, 28(2):Paper No. 2.26, 12, 2021.

\bibitem[HMT21]{HMT}
J.~Han, P.~Morris, and A.~Treglown.
\newblock Tilings in randomly perturbed graphs: bridging the gap between {H}ajnal-{S}zemer\'{e}di and {J}ohansson-{K}ahn-{V}u.
\newblock {\em Random Structures Algorithms}, 58(3):480--516, 2021.

\bibitem[JK20]{JK}
F.~Joos and J.~Kim.
\newblock Spanning trees in randomly perturbed graphs.
\newblock {\em Random Structures Algorithms}, 56(1):169--219, 2020.

\bibitem[KKS17]{KKS}
M.~Krivelevich, M.~Kwan, and B.~Sudakov.
\newblock Bounded-degree spanning trees in randomly perturbed graphs.
\newblock {\em SIAM J. Discrete Math.}, 31(1):155--171, 2017.

\bibitem[P\'76]{posa}
L.~P\'{o}sa.
\newblock Hamiltonian circuits in random graphs.
\newblock {\em Discrete Math.}, 14(4):359--364, 1976.

\end{thebibliography}
\end{document}